\documentclass[10pt]{article}
%
%

\usepackage{amsmath,amsthm,amsfonts,amssymb,url,authblk}
\usepackage{tikz}
\usetikzlibrary{calc}
\usetikzlibrary{arrows,decorations.markings}

\newtheorem{theorem}{Theorem}[section]

\newtheorem{corollary}[theorem]{Corollary}

\newtheorem{example}{Example}[section]

\newcommand{\MTS}{\ensuremath{\mathsf{MTS}}} 
\newcommand{\STS}{\ensuremath{\mathsf{STS}}} 
\newcommand{\DTS}{\ensuremath{\mathsf{DTS}}} 
\newcommand{\zed}{\ensuremath{\mathbb{Z}}} 
\newcommand{\TT}{\ensuremath{\mathcal{T}}} 
\newcommand{\SSS}{\ensuremath{\mathcal{S}}} 
\newcommand{\RR}{\ensuremath{\mathcal{R}}} 
\newcommand{\D}{\ensuremath{\mathcal{D}}} 
\newcommand{\C}{\ensuremath{\mathcal{C}}} 
\newcommand{\M}[3]{\ensuremath{\mathcal{M}_{#1}#2#3}}

\title{Block-avoiding point sequencings of Mendelsohn triple systems}
\author[1]{Donald L.\ Kreher}
\author[2]{Douglas R.\ Stinson%
\thanks{D.R.\ Stinson's research is supported by  NSERC discovery grant RGPIN-03882.}}
\author[3]{Shannon Veitch}
\affil[1]{Department of Mathematical Sciences, 
Michigan Technological University 
Houghton, MI 49931,  
U.S.A.}
\affil[2]{David R.\ Cheriton School of Computer Science, University of Waterloo,
Waterloo, Ontario, N2L 3G1, Canada}
\affil[3]{Department of Combinatorics and Optimization, University of Waterloo,
Waterloo, Ontario, N2L 3G1, Canada}

\begin{document}
\maketitle

\begin{abstract}
A cyclic ordering of the points in a Mendelsohn triple system of order $v$ (or \MTS$(v)$) is called a \emph{sequencing}.
A sequencing $\D$ is \emph{$\ell$-good}
if there does not exist a triple $(x,y,z)$ in the \MTS$(v)$ such that \begin{enumerate}
\item  
the three points $x,y,$ and $z$  occur (cyclically) in that order in $\D$; and \item  
$\{x,y,z\}$ is a subset of $\ell$ cyclically consecutive points of $\D$.\end{enumerate}
In this paper, we prove some upper bounds on $\ell$ for \MTS$(v)$ having $\ell$-good sequencings and we prove that
any \MTS$(v)$ with $v \geq 7$ has a $3$-good sequencing. We also determine the optimal sequencings of every 
\MTS$(v)$ with $v \leq 10$. 
\end{abstract}

\section{Introducton}

There has been considerable recent interest in different kinds of block-avoiding sequencings of Steiner triple systems (or \STS$(v)$). See for example, \cite{Alspach,AKP,KS,KS2,SV}. A similar problem, in the setting of directed triple systems (or \DTS$(v)$), was introduced in \cite{KSV}. In this paper, we initiate a study of sequencings of Mendelsohn triple systems, or  \MTS$(v)$. 

A \emph{cyclic triple} is an ordered 
triple $(x,y,z)$, where $x,y,z$ are distinct. 
This triple contains the directed edges (or ordered pairs) $(x,y)$, $(y,z)$ and $(z,x)$
(we might also write these directed edges as $xy$, $yz$ and $zx$, respectively). 
Note that $(x,y,z)$, $(y,z,x)$ and $(z,x,y)$ are ``equivalent'' when considered as cyclic triples, i.e., they all contain the same three directed edges. The cyclic triple $(x,y,z)$ can be depicted as follows:

\begin{center}
\tikz[scale=0.5,line width=1]{
\coordinate (x) at (210:2.0);
\coordinate (y) at ( 90:0.5);
\coordinate (z) at (330:2.0);
\draw[->,>=stealth] (x)--($(x)!0.5!(y)$);
\draw[->,>=stealth] (y)--($(y)!0.5!(z)$);
\draw[->,>=stealth] (z)--($(z)!0.5!(x)$);
\draw(x)--(y)--(z)--cycle;
\node at ($(0,0)!1.25!(x)$) {$x$};
\node at ($(0,0)!1.90!(y)$) {$y$};
\node at ($(0,0)!1.25!(z)$) {$z$};
\foreach \i in {x,y,z}
{
\draw[fill] (\i) circle[radius=0.1];
}
}
\end{center}

Let $X$ be a set of $v$ points (or vertices) and let $\vec{K_v}$ denote the complete directed graph on vertex set $X$. This graph has $v(v-1)$ directed edges.
A  \emph{Mendelsohn triple system of order $v$} (see \cite{Men}) is a pair $(X, \TT)$, where 
$X$ is a set of $v$ \emph{points} and $\TT$ is a set of cyclic triples (or more simply, \emph{triples}) whose elements are members of $X$, such that every directed  edge in $\vec{K_v}$ occurs in exactly one triple in $\TT$. 
In graph-theoretic language, we are decomposing the complete directed graph into directed cycles of length three. 
  
We will abbreviate the phrase ``Mendelsohn triple system of order $v$'' to 
\MTS$(v)$. It is well-known that an \MTS$(v)$ contains exactly $v(v-1)/3$ triples, and an 
\MTS$(v)$ exists if and only  if $v \equiv 0,1 \bmod 3$, $v \neq 6$. 
Various results on \MTS$(v)$ can  be found in \cite{CR}.

Suppose $(X, \TT)$ is an \MTS$(v)$, where, for convenience, 
$X = \{1, \dots , v\}$.
Suppose we arrange the points in $X$ in a directed cycle, say $\D = (i_1 \; i_2 \: \cdots \; i_v)$. 
We will refer to such a directed cycle as a \emph{sequencing}. Clearly, any cyclic shift of the sequencing $\D$ is equivalent to $\D$. 

A \emph{cyclic ordering} can be defined as a ternary relation as follows. 
Given a sequencing 
$\D = (i_1 \; i_2 \: \cdots \; i_v)$, we first define the associated total ordering $i_1 < i_2 < \cdots < i_v$.
Then we define the induced ternary relation $\C(\D)$ as follows 
\begin{center}
$[x,y,z] \in \C(\D)$ if and only if $x < y < z$ or $y < z < x$ or $z < x < y$.
\end{center}
Observe that any cyclic shift of $\D$ gives rise to the same ternary relation.

This definition can be explained informally as follows: In order to determine if a triple $[x,y,z] \in \C(\D)$, we start at $x$ and proceed around the directed cycle $\D$. Then $[x,y,z] \in \C(\D)$ if and only if we encounter $y$ before we encounter $z$. From this, it is obvious that exactly one of $[x,y,z]$ or $[x,z,y]$ is in $\C(\D)$.

We say that a cyclic triple $T = (x,y,z)$ is \emph{contained} in a sequencing $\D = (i_1 \; i_2 \: \cdots \; i_v)$
if $[x,y,z] \in \C(\D)$. 
For an integer $\ell \geq 3$, we say that the sequencing $\D$ is \emph{$\ell$-good}
if there does not exist a triple $(x,y,z) \in \TT$ such that 
\begin{enumerate}
\item $(x,y,z)$ is contained  in $\D$, and
\item $\{x,y,z\}$ is a subset of $\ell$ cyclically consecutive points of $\D$.
\end{enumerate}

Of course an $\ell$-good sequencing is automatically 
$\kappa$-good for all $\kappa$ such that $3 \leq \kappa \leq \ell-1$.

The basic questions we address in this paper are as follows:
\begin{itemize}
\item Given a particular \MTS$(v)$, say $(X,\TT)$, what is the largest integer $\ell$ such that $(X,\TT)$ has an
$\ell$-good sequencing?
\item Given a positive integer $v \equiv 0,1 \bmod 3$, $v \neq 6$, what is the largest integer $\ell$ such that \begin{itemize}
\item
there exists an \MTS$(v)$ that has an $\ell$-good sequencing, or 
\item every \MTS$(v)$  has an $\ell$-good sequencing?
\end{itemize}
\end{itemize}

\begin{example}
The triples $(0, 1, 3)$ and $(0, 3, 2)$, developed  modulo $7$, yield an  \MTS$(7)$.
It is not hard to see that 
$\D = (0 \; 1 \;  2  \; 3  \; 4  \; 5  \; 6)$ is a $3$-good sequencing for this \MTS$(7)$. This follows because:
\begin{enumerate}
\item
none of the seven triples obtained from $(0, 3, 2)$ are contained in $\D$, and
\item the seven triples obtained from $(0, 1,3)$ are contained in $\D$, but none of these triples 
is a subset of three cyclically consecutive points of $D$.
\end{enumerate}
However, this sequencing is not $4$-good, because each of the triples obtained from $(0,1,3)$ is a subset of four cyclically consecutive points of $\D$. 
\end{example}

The rest of this paper is organized as follows. In  Section \ref{nec.sec}, we prove an \MTS$(v)$ has an $\ell$-good sequencing only if $\ell \leq \lfloor \frac{v-1}{2} \rfloor$.  In Section \ref{small.sec},
we summarize the results of computer searches we used to determine the optimal sequencings of every 
\MTS$(v)$ with $v \leq 10$. 
In Section \ref{label.sec}, we prove that
any \MTS$(v)$ with $v \geq 7$ has a $3$-good sequencing.
Finally, in Section \ref{comments.sec}, we conclude with a few comments.

\section{Necessary Conditions}
\label{nec.sec}

\begin{theorem}
\label{even.thm}
Suppose $v$ is even. Then no \MTS$(v)$ has an $\ell$-good sequencing if $\ell \geq v/2$.
\end{theorem}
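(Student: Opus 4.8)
The plan is to prove the apparently stronger statement that no \MTS$(v)$ has a $(v/2)$-good sequencing; since for $\ell \geq v/2$ an $\ell$-good sequencing is in particular $(v/2)$-good (by the monotonicity remark, as $v/2 \geq 3$ when $v \geq 6$), this gives the theorem. So I fix an \MTS$(v)$ and, after relabelling the points, assume the sequencing is $\D = (0 \; 1 \; \cdots \; v-1)$ with arithmetic in $\zed_v$. For a directed edge I write $d(a,b) = (b-a) \bmod v \in \{1, \dots, v-1\}$ for its \emph{forward length}. The three forward lengths of a triple are congruent to $0$ modulo $v$, hence sum to $v$ or $2v$; the triple is contained in $\D$ exactly when they sum to $v$, in which case they are precisely the three gaps between its points along $\D$. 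Thus a contained triple is \emph{bad} (witnessing failure of $(v/2)$-goodness) exactly when one of its gaps exceeds $v/2$, and I assume for contradiction that every contained triple has all three gaps at most $v/2$.

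First I examine the $v$ edges of forward length $1$. Any triple containing such an edge has forward-length sum forced to be $v$, since the two complementary lengths cannot total $2v-1$; hence it is contained. No contained triple can contain two length-$1$ edges, as that would force a gap of $v-2 > v/2$. So these \emph{unit triples} are $v$ distinct contained triples, and the requirement that their gaps lie in $\{1,\dots,v/2\}$ forces each gap multiset to be exactly $\{1,\, v/2-1,\, v/2\}$. Consequently the unit triples already consume all $v$ edges of length $1$, all $v$ edges of length $v/2-1$, and all $v$ edges of length $v/2$.

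The second step is the crucial exclusion: no contained triple contains an edge of length $2$. Such a triple would have gaps $2,\, b,\, v-2-b$ with $b$ and $v-2-b$ both lying in $\{v/2-2,\, v/2-1,\, v/2\}$; but lengths $v/2-1$ and $v/2$ are no longer available, having been consumed by the unit triples, so $b = v-2-b = v/2-2$ is forced, which is impossible since $2(v/2-2)=v-4\neq v-2$. Together with the fact that edges of length exceeding $v/2$ never lie in a contained triple (such a length would itself be a gap above $v/2$), this shows that every contained triple apart from the $v$ unit triples uses only edges of length in $\{3,4,\dots,v/2-2\}$. I expect this length-$2$ exclusion to be the main obstacle and the step that makes everything work: without it the edge budget below is not tight, so it is essential that the unit triples pre-empt exactly the lengths $v/2-1$ and $v/2$.

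Finally I count. Summing forward lengths over all directed edges shows that exactly half of the $v(v-1)/3$ triples are contained, giving $v(v-1)/6$ contained triples and hence $v(v-1)/6 - v = v(v-7)/6$ non-unit contained triples. These are edge-disjoint and each uses three edges of length in $\{3,\dots,v/2-2\}$, so they require $v(v-7)/2$ such edges; but only $v(v/2-4)=v(v-8)/2$ edges of those lengths exist, and $v(v-7)/2 > v(v-8)/2$, a contradiction. This completes the argument for the even orders $v \geq 10$ that admit an \MTS$(v)$; the only remaining even order is the sporadic $v=4$, which falls outside the range where the length arguments apply and is checked directly.
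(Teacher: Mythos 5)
Your proof is correct, and its second half takes a genuinely different route from the paper's. The opening step coincides with the paper's first move, just in different notation: forcing each unit triple to have gap multiset $\{1,\, v/2-1,\, v/2\}$ is exactly the paper's conclusion that the triple on the edge $(i,i+1)$ must be $R_i=(i,i+1,i+v/2)$ or $S_i=(i,i+1,i+1+v/2)$. From there the paper works locally: it proves a propagation lemma showing that $\RR \subseteq \TT$ or $\SSS \subseteq \TT$, then in each of the two cases pins down the triple on the edge $(0,2)$ as $(0,2,1)$, and gets a contradiction because $(0,2,1)$ and $(1,3,2)$ share the directed edge $(2,1)$. You bypass both the propagation lemma and the case split: since the $v$ unit triples are pairwise edge-disjoint and each consumes one edge of length $v/2-1$ and one of length $v/2$, they exhaust all such edges no matter which of $R_i$, $S_i$ occurs at each $i$ --- a real simplification. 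Your finish is then global rather than local: the exact count $v(v-1)/6$ of contained triples (obtained by summing forward lengths over all edges) and the budget inequality $v(v-7)/2 > v(v-8)/2$ replace the paper's explicit pair of clashing triples. The paper's route buys a purely pointwise contradiction with no counting; yours buys the elimination of the two-case analysis and a reusable structural fact (exactly half the triples of an \MTS$(v)$ are contained in any sequencing). Incidentally, once you know no contained triple uses a length-$2$ edge, your sum-equals-$v$-or-$2v$ dichotomy forces the triple on $(0,2)$ to be $(0,2,1)$, so you could also have closed with the paper's one-line edge clash and skipped the counting. One loose end: you assert that $v=4$ is checked directly but never do the check; it is trivial (any $3$-subset of $4$ points lies in $3$ consecutive points, and your own formula gives $v(v-1)/6=2$ contained triples in any sequencing of the unique \MTS$(4)$, so no $3$-good sequencing exists), and the paper's own argument is likewise degenerate at $v=4$ (there $R_0$ and $S_1$ coincide as cyclic triples), so this is cosmetic rather than substantive --- but you should state the check explicitly.
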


\begin{proof}
Without loss of generality, we assume the sequencing is \[\D = (0 \; 1 \;  \cdots \; v-1).\]
We will show that there is no \MTS$(v)$, say $(X,\TT)$, where $X = \{ 0, 1, \dots , v-1\}$ and for which $\D$ is
a $v/2$-good sequencing. In what follows, all arithmetic is modulo $v$.

There must be  a triple $(0,1,x) \in \TT$, where $x \in \{2, \dots , v-1\}$. If $2 \leq x < v/2$, then 
$\{0,1,x\}$ is a subset of the first $v/2$ points of $\D$, namely, $0, 1, \dots , v/2 - 1$. 
Similarly, if $2+v/2 \leq x \leq v - 1$, then 
$\{0,1,x\}$  a subset of $v/2$ cyclically consecutive points of $\D$, namely, $2+v/2,  \dots , v  - 1, 0 ,1$.
Hence, $x = v/2$ or $x = 1+v/2$ and thus either
\[R_0 = \left(0, 1, \frac{v}{2}\right) \quad \text{or} \quad S_0 = \left(0, 1, 1 + \frac{v}{2}\right)\] is a triple in $\TT$.

Similarly, it follows for each $i \in \zed_v$ that exactly one of 
\[R_i = \left(i, i+1, i+\frac{v}{2}\right) \quad \text{or} \quad S_i = \left(i, i+1, i+1 + \frac{v}{2}\right)\] is a triple in $\TT$. Let $\RR = \{ R_i : i \in \zed_{v}\}$ and let $\SSS = \{S_i : i \in \zed_{v}\}$.

Suppose $R_i \in \TT$. This triple contains the ordered pair $(i+v/2,i)$. The triple 
\[S_{i-1+v/2} =
\left(i-1+\frac{v}{2}, i+\frac{v}{2}, i\right)\] also contains the ordered pair $(i+v/2,i)$, so $S_{i-1+v/2} \not\in \TT$.
Then it must be the case that $R_{i-1+v/2} \in \TT$. Similarly, the triples $R_i$ and 
\[S_{i+v/2} =
\left(i+\frac{v}{2}, i+1+\frac{v}{2}, i+1\right)\] both contain the ordered pair $(i+1,i+v/2)$. Therefore  $S_{i+v/2} \not\in \TT$ and 
hence $R_{i+v/2} \in \TT$. In summary, if $R_i \in \TT$, then $R_{i-1+v/2} \in \TT$ and $R_{i+v/2} \in \TT$.

Now, using the fact that $R_{i-1+v/2} \in \TT$, we see that $R_{i-1+v/2+v/2} = R_{i-1} \in \TT$.  
Therefore, if $R_i \in \TT$, we have that $R_{i-1} \in \TT$.
From this, it follows easily that $\RR \subseteq \TT$ or $\SSS \subseteq \TT$. We consider the following two cases.

\medskip

\noindent\textbf{Case 1 }: $\RR \subseteq \TT$ and $\SSS \cap \TT = \emptyset$.

\smallskip

The triples in $\RR$ cover all ordered pairs having differences $1$, $v/2 - 1$ and $v/2$, where the 
\emph{difference} of a pair $(a,b)$ is $(b - a) \bmod v$.
Now consider the ordered pair $(0,2)$, which has difference $2$. There must be a triple $(0,2,x) \in \TT$.

If $3 \leq x  < v/2$, then 
$\{0,2,x\}$ is a subset of the first $v/2$ points of $\D$, namely, $0, 1, \dots , v/2 - 1$. 
Similarly, if $3+v/2 \leq x \leq v - 1$, then 
$\{0,2,x\}$ is again a subset of $v/2$ cyclically consecutive points of $\D$, 
namely, $3+v/2,  \dots , v - 1, 0 ,1,2$.

If $x \in \{v/2, 1+v/2,2+v/2\}$, then we have have two pairs with difference $v/2-1$ or $v/2$, because
\begin{eqnarray*}
0 - \frac{v}{2} &=& \frac{v}{2},\\
1 + \frac{v}{2} - 2 &=& \frac{v}{2} - 1, \quad \text{ and}\\
  2+ \frac{v}{2}  - 2 &=& \frac{v}{2}.
  \end{eqnarray*}
Therefore, $x = 1$. 

It follows in a similar manner that all the triples of the form $(i,i+2,i+1)$ are in $\TT$. But this is impossible because $(0,2,1)$ and $(1,3,2)$ both contain the ordered pair $(2,1)$.

\medskip

\noindent\textbf{Case 2 }: $\SSS \subseteq \TT$ and $\RR \cap \TT = \emptyset$.

\smallskip

The triples in $\SSS$ also cover all ordered pairs having differences $1$, $v/2-1$ and $v/2$.
Therefore the proof is identical to case 1.

\end{proof}

Now we turn to the case of odd $v$.

\begin{theorem}
\label{odd.thm}
Suppose $v$ is odd. Then no MTS$(v)$ has an $\ell$-good sequencing if $\ell \geq (v+1)/2$.
\end{theorem}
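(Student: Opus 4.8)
The plan is to follow the template of the proof of Theorem~\ref{even.thm}, but the odd case turns out to be easier: the forbidden structure forces a \emph{unique} triple through each difference-$1$ edge (rather than one of two candidates), and this produces an immediate edge collision with no case analysis. Since an $\ell$-good sequencing is automatically $\kappa$-good for every $\kappa \le \ell$, it suffices to rule out $(v+1)/2$-good sequencings. So assume, without loss of generality, that $\D = (0\;1\;\cdots\;v-1)$ and that $(X,\TT)$ is an MTS$(v)$ for which $\D$ is $(v+1)/2$-good; all arithmetic is modulo $v$, and write $m = (v+1)/2$.

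First I determine the triple on the edge $(0,1)$. There is a unique triple $(0,1,x)\in\TT$, and because $x \ge 2$ it is always contained in $\D$. Hence $(v+1)/2$-goodness forces $\{0,1,x\}$ to avoid every window of $m$ cyclically consecutive points. The windows of size $m$ containing the adjacent pair $\{0,1\}$ range from $\{0,1,\dots,m-1\}$ to $\{m+1,\dots,v-1,0,1\}$, and together they contain every value in $\{2,\dots,v-1\}$ except $x=m$. Therefore $x=m$ is forced, i.e.\ $(0,1,m)\in\TT$. Applying the cyclic shift $j\mapsto j+i$ (which preserves both $\D$ and $\C(\D)$), the same reasoning gives, for every $i\in\zed_v$, that $T_i=(i,\,i+1,\,i+m)\in\TT$. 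This window computation is the one step that needs care, the only subtlety being the correct treatment of the wrap-around windows.

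It remains to derive a contradiction. Besides the difference-$1$ edge $(i,i+1)$, each $T_i$ contains the edges $(i+1,\,i+m)$ and $(i+m,\,i)$, both of difference $(v-1)/2$, since $m-1\equiv -m\equiv (v-1)/2 \pmod v$. I claim two of these triples share an edge: taking $j=i+m-1$, the middle edge of $T_j$ is $(j+1,\,j+m)=(i+m,\,i)$, which is precisely the last edge of $T_i$; and $T_i\ne T_j$ because $m-1=(v-1)/2\not\equiv 0 \pmod v$. Thus the directed edge $(i+m,\,i)$ lies in two distinct triples of $\TT$, contradicting the defining property of an MTS. (Equivalently, the $v$ triples $T_i$ would use $2v$ edges of difference $(v-1)/2$, whereas only $v$ such edges exist.) The main obstacle is the window analysis of the first step; once the unique forced triple is identified, the collision is immediate and no further case distinction is needed.
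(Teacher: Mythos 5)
Your proof is correct and takes essentially the same approach as the paper: the window analysis forces $(i,\,i+1,\,i+\frac{v+1}{2}) \in \TT$ for every $i$, and then two of these forced triples are shown to share a directed edge. Your colliding pair $(T_i, T_{i+m-1})$ is, for $i=m$, exactly the paper's pair $(0,1,\frac{v+1}{2})$ and $(\frac{v+1}{2},\frac{v+3}{2},1)$ sharing the edge $(1,\frac{v+1}{2})$, so the difference is only in parametrization.
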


\begin{proof} 
The proof is similar to that of Theorem \ref{even.thm}.
We assume the sequencing is $\D = (0 \; 1 \;  \cdots \; v-1)$.
We will show that there is no \MTS$(v)$, say $(X,\TT)$, where $X = \{ 0, 1, \dots , v-1\}$ and for which $\D$ is
a $(v+1)/2$-good sequencing. 
There must be  a triple $(0,1,x) \in \TT$, where $x \in \{2, \dots , v-1\}$. If $2 \leq x \leq (v-1)/2$, then 
$\{0,1,x\}$ is a subset of the first $(v+1)/2$ points of $\D$, namely, $0, 1, \dots , (v-1)/2$. 
Similarly, if $(v+3)/2 \leq x \leq v - 1$, then 
$\{0,1,x\}$ is a also a subset of   $(v+1)/2$ cyclically consecutive points of $\D$, 
namely, $(v+3)/2,  \dots , v  - 1, 0 ,1$. Hence, it must be the case that $x = (v+1)/2$, i.e., $(0,1,(v+1)/2) \in \TT$.

An identical argument shows that $\TT$ must contain all of the triples 
\[\left(i,i+1,i+\frac{v+1}{2}\right),\] where arithmetic is moduli  $v$ and $0 \leq i \leq v-1$. In particular, $\TT$ contains the triples 
\[ \left(0,1,\frac{v+1}{2}\right) \quad \text{and} \quad
\left(\frac{v+1}{2}, \frac{v+3}{2}, 1\right).\] But these two triples both contain the ordered pair $(1,(v+1)/2)$, so we have a contradiction.
\end{proof}

Combining Theorems \ref{even.thm} and \ref{odd.thm}, we obtain the following.

\begin{corollary}
\label{bound.cor}
If an \MTS$(v)$ has an $\ell$-good sequencing, then $\ell \leq \lfloor \frac{v-1}{2} \rfloor$.
\end{corollary}

\section{Sequencings of \MTS$(v)$ for Small Values of $v$}
\label{small.sec}

We have determined the optimal sequencings for all \MTS$(v)$ with $v \leq 10$.
The results are given in Table \ref{summary}. This table lists the number of nonisomorphic \MTS$(v)$ for each $v$,
along with the number of designs that have $3$-good and $4$-good sequencings.  None of these designs have $5$-good sequencings, by Corollary \ref{bound.cor}. 

We present the three \MTS$(9)$ that have $4$-good sequencings, as well as  the five \MTS$(10)$ that do not have have $4$-good sequencings, in the Appendices. 
Additional details can be found in the technical report
\cite{KSV2}. 

We noticed one particularly interesting fact concerning the five nonisomorphic \MTS$(10)$ that do not have a 4-good sequencing. If any triple is removed from one of these five \MTS$(10)$, then the resulting ``partial'' 
\MTS$(10)$ having $29$ triples turns out to have a $4$-good sequencing (we verified this fact by computer). So these \MTS$(10)$ ``almost'' have $4$-good sequencings. In fact, we know from these results that any ``partial'' 
\MTS$(10)$ having $29$ triples has a $4$-good sequencing. This is because such a partial
\MTS$(10)$ can automatically be completed to an \MTS$(10)$, and therefore any partial
\MTS$(10)$ having $29$ triples arises from the deletion of a triple from an \MTS$(10)$. Clearly, if we delete a triple from an \MTS$(10)$ that has a $4$-good sequencing, then the resulting partial
\MTS$(10)$ also has a $4$-good sequencing.

\begin{table}[tb]
\caption{Sequencings of \MTS$(v)$ with $v \leq 10$}\label{summary}
\begin{center}
\begin{tabular}{*{6}{r}}
\hline
  & 
  Nonisomorphic& 
  \multicolumn{2}{c}{$\ell$-good sequencings}\\
  $v$&
  $\MTS(v)$&
  $\ell=3$ & $\ell=4$ \\
\hline
  3& 1&  0&  0  \\
  4&  1&  0&  0  \\
  6&   0&  0&  0\\
  7&   3&  3&  0  \\
  9& 18& 18&  3  \\
 10&143&143&138 \\
 \hline
\end{tabular}
\end{center}
\end{table}

\section{Constructing 3-good Sequencings}
\label{label.sec}

Charlie Colbourn proved that any \STS$(v)$ has a 3-good sequencing. His method is described in \cite{KS}; it is based on examining the triples that contain a particular point $x$ and then relabelling the points in a suitable way. We have adapted this approach to obtain 3-good sequencings of \MTS$(v)$; however, it turned out to be quite a bit more complicated to obtain the desired result for \MTS$(v)$ that it did for \STS$(v)$.

Suppose $(X,\TT)$ is an \MTS$(v)$ and fix a particular point $x \in X$. 
Construct a directed graph $G_x$ on vertex set
$X \setminus \{x\}$ as follows. For every triple $(x,y,z) \in \TT$ (or a cyclic rotation of this triple), 
include the directed edge $(y,z)$ in $G_x$.
It is not hard to see that $G_x$ consists of a vertex-disjoint union of one or more directed cycles (note that some of these directed cycles could have length two).  Suppose the directed cycles are named $C_1, C_2, \dots ,C_s$.
We can construct a (cyclic) sequencing $\D_x$ of $X \setminus \{x\}$ by writing out the cycles 
$C_1, C_2, \dots ,C_s$ in order. For each of the cycles $C_i$, we can arbitrarily pick any vertex in the cycle as a starting point. 

It is easy to see that no three cyclically consecutive vertices of the sequencing $\D_x$ comprise a triple. 
Consider three consecutive vertices, say  $x_i$, $x_j$ and $x_k$. 
At least one of  $(x_i,x_j)$ or $(x_j,x_k)$ is an edge in $G_x$. 
In the first case, $(x,x_i,x_j) \in \TT$ so $(x_i,x_j,x_k) \not\in \TT$, and in the second case,
$(x,x_j,x_k) \in \TT$ so again $(x_i,x_j,x_k) \not\in \TT$.

The difficulty is that, if we insert $x$ into $\D_x$ in any position, the sequencing is no longer $3$-good. So we need to modify $\D_x$ at the same time that we insert $x$. We  illustrate how this can be done, in various situations, in the rest of this section.

\subsection{A Directed Cycle of Length at least Six}
\label{t.sec}

Let's suppose that $C_1$ is a directed cycle of length $\tau \geq 6$, say $(1 , 2 , \dots , \tau)$. This means that the following five triples are in $\TT$:
\[
(x,1,2)  \quad (x,2,3) \quad (x,3,4) \quad (x,4,5) \quad (x,5,6).
\]
Suppose that $\D_x = (1 \; 2 \; \cdots \; \tau \; \cdots \; v)$.  Replace the four  vertices 
$1 \; 2 \; 3 \;  4$ by $3 \; 4 \; 1 \; x \; 2$, obtaining a sequencing $\D$ of $X$. Notice that $v$ is the vertex preceding $3$ in $\D$. We check that there are no triples comprising three consecutive vertices of the modified sequencing $\D$:
\begin{center}
\begin{tabular}{ll}
$(v,3,4)$ $\not\in \TT$ &because $(x,3,4)$ $\in \TT$\\
$(3,4,1)$ $\not\in \TT$ &because $(x,3,4)$ $\in \TT$\\
$(4,1,x)$ $\not\in \TT$ &because $(x,4,5)$ $\in \TT$\\
$(1,x,2)$ $\not\in \TT$ &because $(x,2,3)$ $\in \TT$\\
$(x,2,5)$ $\not\in \TT$ &because $(x,2,3)$ $\in \TT$\\
$(2,5,6)$ $\not\in \TT$ &because $(x,5,6)$ $\in \TT$.
\end{tabular}
\end{center}

\subsection{Two Directed Cycles, Each of Length at Least Three}
\label{3.3.sec}

Suppose that $G_x$ contains two directed cycles of length at least three, say $(y,3,4,5, \dots)$ (note that it is possible that $y= 5$, if this cycle has length three)
and $(1,2,z,\dots)$. We can assume  
that $y \; 3 \; 4\; 1\; 2\; z$ are consecutive vertices in $\D_x$.

The following five triples are in $\TT$:
\[
(x,1,2) \quad (x,2,z) \quad  (x,y,3) \quad  (x,3,4) \quad  (x,4,5).
\]

Delete the two consecutive vertices $4\; 1$ from $\D_x$ and replace them by $1 \; x \; 4$, obtaining a sequencing $\D$ of $X$. We check that there are no triples comprising three consecutive vertices of the modified sequencing $\D$:
\begin{center}
\begin{tabular}{ll}
$(y,3,1)$ $\not\in \TT$ &because $(y,3,x)$ $\in \TT$\\
$(3,1,x)$ $\not\in \TT$ &because $(x,3,4)$ $\in \TT$\\
$(1,x,4)$ $\not\in \TT$ &because $(x,4,5)$ $\in \TT$\\
$(x,4,2)$ $\not\in \TT$ &because $(x,4,5)$ $\in \TT$\\
$(4,2,z)$ $\not\in \TT$ &because $(x,2,z)$ $\in \TT$.
\end{tabular}
\end{center}

\subsection{Two Directed Cycles of Length Two}
\label{2.2.sec}

Suppose that $v \geq 7$ and there are two directed cycles in $G_x$ having length two,  
say $(1 , 2)$ and $(3 , 4)$.
We assume that $3 \; 4 \; 1\; 2$ are consecutive vertices in $\D_x$.
The following four triples are in $\TT$:
\[
(x,1,2) \quad  (x,2,1) \quad  (x,3,4) \quad  (x,4,3).
\]
 
Choose $z \geq 5$ such that $(4,2,z) \not\in \TT$ and 
choose $y \geq 5$ such that $(3,1,y) \not\in \TT$ and $y \neq z$.
We also require that
\begin{itemize}
\item $y$ and $z$ are in different directed cycles in $G_x$, or
\item if $G_x$ contains only three directed cycles, then $y$ immediately precedes $z$ in a directed cycle in $G_x$
(this can be done if $v \geq 7$).
\end{itemize}
Then we can assume that $y$ immediately precedes $3$ in $\D_x$ and $z$ immediately follows $2$ in $\D_x$. 
Now, delete the two consecutive vertices  $1\; 4$ from $\D_x$ and replace them by $4\; 1 \; x$, obtaining a sequencing $\D$ of $X$. 

We check that there are no triples comprising three consecutive vertices of the modified sequencing $\D$:
\begin{center}
\begin{tabular}{ll}
$(y,3,1)$ $\not\in \TT$ &by the choice of $y$\\
$(3,1,x)$ $\not\in \TT$ &because $(x,3,4)$ $\in \TT$\\
$(1,x,4)$ $\not\in \TT$ &because $(x,4,3)$ $\in \TT$\\
$(x,4,2)$ $\not\in \TT$ &because $(x,4,3)$ $\in \TT$\\
$(4,2,z)$ $\not\in \TT$  &by the choice of $z$.
\end{tabular}
\end{center}

\subsection{Two Directed Cycles, One of Length Two}
\label{2.t.sec}

Suppose that $v \geq 6$ and $G_x$ consists of exactly two directed cycles, 
one of length two, say $(1 , 2)$, and one of length $v-2$,
say $(3,4 , \dots , v)$. This implies that the following  triples are in $\TT$:
\[
(x,1,2)  \quad (x,2,1) \quad (x,3,4) \quad (x,4,5) \quad 
 (x,5,6) \quad (x,v-1,v) \quad (x,v,3).
\]
We assume that $\D_x =  (1\; 2\; 3 \; 4 \; \cdots \; v-1 \; v)$.

Now, if $(5,4,2) \in \TT$, then $(5,4,1) \not\in \TT$. Therefore by interchanging $1$ and $2$ if necessary, we can assume that $(5,4,2) \not\in \TT$.

Replace the four  vertices 
$1 \; 2 \; 3 \;  4$ in $\D_x$ by $3 \; 1 \; x \; 4 \; 2$ to construct the sequencing $\D$, so 
We check that there are no triples comprising three consecutive vertices of the  sequencing $\D$:
\begin{center}
\begin{tabular}{ll}
$(v-1,v,3)$ $\not\in \TT$ &because $(x,v-1,v)$ $\in \TT$\\
$(v,3,1)$ $\not\in \TT$ &because $(x,v,3)$ $\in \TT$\\
$(3,1,x)$ $\not\in \TT$ &because $(x,3,4)$ $\in \TT$\\
$(1,x,4)$ $\not\in \TT$ &because $(x,4,5)$ $\in \TT$\\
$(x,4,2)$ $\not\in \TT$ &because $(x,4,5)$ $\in \TT$\\
$(4,2,5)$ $\not\in \TT$ &by assumption\\
$(2,5,6)$ $\not\in \TT$ &because $(x,5,6)$ $\in \TT$.
\end{tabular}
\end{center}

\subsection{The Main Theorem}

We can now show that the four cases we have considered cover all possibilities. 
Suppose that $v \geq 7$ and we classify $G_x$ according to the number of directed cycles of length two that it contains.
\begin{itemize}
\item If $G_x$ has at least two directed cycles of length two, use the construction in Section \ref{2.2.sec}.
\item If $G_x$ has exactly one directed cycle of length two, then either
\begin{itemize}
\item $G_x$ contains at least two directed cycles of length at least three, in which case we can 
use the construction in Section \ref{3.3.sec}, 
or 
\item $G_x$ consists of exactly two directed cycles, one of length two and one of length at least five, so we 
can use the construction in Section \ref{2.t.sec}.
\end{itemize} 
\item If $G_x$ has no  directed cycles of length two, then either
\begin{itemize}
\item $G_x$ contains at least two directed cycles of length at least three, in which case we can 
use the construction in Section \ref{3.3.sec}, 
or 
\item $G_x$ consists of a single directed cycle of  length at least seven, so we 
can use the construction in Section \ref{t.sec}.
\end{itemize} 
\end{itemize} 

Therefore, we have the following result.

\begin{theorem} Any \MTS$(v)$ with $v \geq 7$ has a $3$-good sequencing.
\end{theorem}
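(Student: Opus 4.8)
The plan is to fix an arbitrary point $x \in X$ and to build the $3$-good sequencing around the local structure of the triples through $x$, assembling the four edits constructed in Sections~\ref{t.sec}--\ref{2.t.sec}. First I would record the reduction that makes the whole scheme work: by the definition of $\C(\D)$, a triple $(a,b,c) \in \TT$ violates $3$-goodness precisely when $\{a,b,c\}$ is a set of three cyclically consecutive points of $\D$ \emph{and} these points occur in the cyclic order $a,b,c$. Hence a sequencing is $3$-good if and only if no three cyclically consecutive points $a,b,c$ of $\D$ satisfy $(a,b,c) \in \TT$, and this is exactly the finite property each construction is designed to certify.

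Next I would recall that $G_x$ is a vertex-disjoint union of directed cycles on $X \setminus \{x\}$ (every vertex has in-degree and out-degree one), and that the concatenated sequencing $\D_x$ of $X \setminus \{x\}$ already avoids consecutive triples. The entire difficulty is that inserting $x$ into $\D_x$ creates new consecutive windows of the form $(a,x,b)$, $(w,a,x)$ and $(x,b,c)$; so $x$ must be inserted together with a local permutation of a few neighbouring vertices chosen so that each newly created window is blocked by one of the known triples $(x,\cdot,\cdot) \in \TT$. The four constructions each exhibit such a local edit, and in each the verification is a short check of the handful of consecutive triples inside the edited window, every other consecutive triple being inherited unchanged from $\D_x$.

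The real content is to show that these four situations are exhaustive for every $v \geq 7$, which I would organise by the number $k$ of length-two cycles of $G_x$. If $k \geq 2$, use the two-two-cycle edit of Section~\ref{2.2.sec}. If $k = 1$, the remaining $v-3 \geq 4$ vertices lie in cycles of length at least three, and either there are two such cycles (use Section~\ref{3.3.sec}) or a single long cycle (use Section~\ref{2.t.sec}). If $k = 0$, then either there are at least two cycles of length at least three (Section~\ref{3.3.sec}) or $G_x$ is a single cycle, which for $v \geq 7$ necessarily has length $v-1 \geq 6$ (Section~\ref{t.sec}). Since the excluded lengths ($1$, and single cycles shorter than $6$) would force $v \leq 6$, every $G_x$ falls into one of the four handled cases.

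I expect the main obstacle to be the bookkeeping at the smallest admissible $v$, where the available cycle lengths are exactly as short as the constructions permit. When $k=1$ with a single long cycle, its length is $v-3$, which equals $4$ at $v=7$, so I would verify directly that the edit of Section~\ref{2.t.sec} still blocks every consecutive triple for a length-$4$ cycle rather than relying on the generic longer picture; similarly I would confirm that the single-cycle case of Section~\ref{t.sec} genuinely extends down to length $6$. The other delicate point is the case $k \geq 2$: the selection of the vertices $y$ and $z$ in Section~\ref{2.2.sec} is where the hypothesis $v \geq 7$ is consumed, so I would check carefully that an admissible pair always exists, in particular in the subcase where $G_x$ has only three cycles. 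Once these threshold instances are matched to the constructions, combining the four cases yields the theorem.
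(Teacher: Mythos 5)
Your proposal is correct and takes essentially the same approach as the paper: it fixes a point $x$, uses the cycle decomposition of $G_x$ and the concatenated sequencing $\D_x$, and performs the identical case analysis on the number of $2$-cycles to show that the four local edits of Sections~\ref{t.sec}--\ref{2.t.sec} exhaust all possibilities for $v \geq 7$. The threshold verifications you flag do go through (the edit of Section~\ref{2.t.sec} still blocks every consecutive window when the long cycle has length $4$, and the choice of $y$ and $z$ in Section~\ref{2.2.sec} is exactly where $v \geq 7$ is needed), so your argument coincides with the paper's proof.
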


\section{Comments}
\label{comments.sec}

Recent papers  have considered $\ell$-good sequencings for \STS$(v)$, \DTS$(v)$ and \MTS$(v)$ (however, we should note that the definition of $\ell$-good sequencing is slightly different in each case).
It is interesting to compare the results obtained for these three types of triple systems.
\begin{itemize}
\item For  \STS$(v)$, \cite{SV} establishes that an $\ell$-good sequencing exists only if $\ell \leq (v + 2)/3$.
But there are only a few small examples known where this bound is met with equality. In fact, it is currently unknown if there is an infinite class of \STS$(v)$ that have $(cv)$-good sequencings, for any positive constant $c$.  Proving this for $c \approx 1/2$ would be the best possible result in light of current knowledge, but it would still be of interest if we could establish this result for some smaller value of $c$, say $c = 1/4$. 
It is also known that every \STS$(v)$ has a $3$-good sequencing (see \cite{KS}); every  \STS$(v)$ with $v> 71$ has a $4$-good sequencing (see \cite{KS}); and every \STS$(v)$  with $v \geq \ell^6/16$ has an $\ell$-good sequencing (see \cite{SV}).

\item For \DTS$(v)$ (i.e., directed triple systems of order $v$), it is possible that a $v$-good sequencing exists. 
In fact, there is a \DTS$(v)$ having a $v$-good sequencing for all permissible values of $v$ (see \cite{KSV}).
It is also shown in \cite{KSV} that there is a \DTS$(v)$ that does not have a $v$-good sequencing, for all 
$v \equiv 0,1 \bmod 3$, $v \geq 7$. 
\item In this paper, we showed that
$\ell \leq \lfloor \frac{v-1}{2} \rfloor$ is a necessary condition for the existence of an
$\ell$-good sequencing of an \MTS$(v)$. We showed  that 
this bound is met with equality for $v = 7,9,10$ and we  also proved that every \MTS$(v)$ has a $3$-good sequencing. 
\end{itemize}


\appendix

\section{The three \MTS$(9)$ that have $4$-good sequencings}

\begin{description}
\item[\M{9}{1}{.1}]	
\end{description}

$\begin{array}{@{}*{7}{l@{\hspace{4pt}}}l@{}}
 (0,2,1)& (0,1,6)& (0,3,2)& (0,7,3)& (0,4,7)& (0,8,4)& (0,6,5)& (0,5,8)\\
 (1,2,7)& (1,3,6)& (1,8,3)& (1,5,4)& (1,4,8)& (1,7,5)& (2,3,8)& (2,4,6)\\
 (2,6,4)& (2,5,7)& (2,8,5)& (3,4,5)& (3,7,4)& (3,5,6)& (6,7,8)& (6,8,7)
\end{array}$

\medskip

Lexicographic least 4-good sequencing : 023471856

\medskip

Number of 4-good sequencings found: 18

\begin{description}
\item[\M{9}{3}{.1}]
\end{description}
	
$\begin{array}{@{}*{7}{l@{\hspace{4pt}}}l@{}}
 (0,2,1)& (0,1,3)& (0,6,2)& (0,3,8)& (0,4,6)& (0,7,4)& (0,5,7)& (0,8,5)\\
 (1,2,7)& (1,8,3)& (1,6,4)& (1,4,8)& (1,5,6)& (1,7,5)& (2,6,3)& (2,3,7)\\
 (2,4,5)& (2,8,4)& (2,5,8)& (3,5,4)& (3,4,7)& (3,6,5)& (6,7,8)& (6,8,7)
\end{array}$

\medskip

Lexicographic least 4-good sequencing : 047563812

\medskip

Number of 4-good sequencings found: 36

\begin{description}
\item[\M{9}{7}{.1}]	
\end{description}

$\begin{array}{@{}*{7}{l@{\hspace{4pt}}}l@{}}
 (0,1,2)& (0,2,1)& (0,6,3)& (0,3,8)& (0,4,6)& (0,7,4)& (0,5,7)& (0,8,5)\\
 (1,3,6)& (1,7,3)& (1,4,7)& (1,8,4)& (1,6,5)& (1,5,8)& (2,3,7)& (2,8,3)\\
 (2,6,4)& (2,4,8)& (2,5,6)& (2,7,5)& (3,4,5)& (3,5,4)& (6,7,8)& (6,8,7)
\end{array}$

\medskip

	Lexicographic least 4-good sequencing : 031485726

\medskip

	Number of 4-good sequencings found: 324

\section{The five \MTS$(10)$ that do not have $4$-good sequencings}

\begin{description}
\item[\M{10}{116}{.1}]
\end{description}

$\begin{array}{@{}*{5}{l@{\hspace{4pt}}}l@{}}
 (0,1,8)& (0,9,1)& (0,5,2)& (0,2,7)& (0,4,3)& (0,3,6)\\
 (0,7,4)& (0,6,5)& (0,8,9)& (1,2,3)& (1,3,2)& (1,4,5)\\
 (1,5,4)& (1,6,7)& (1,7,6)& (1,9,8)& (2,6,4)& (2,4,8)\\
 (2,5,9)& (2,8,6)& (2,9,7)& (3,4,9)& (3,7,5)& (3,5,8)\\
 (3,9,6)& (3,8,7)& (4,6,8)& (4,7,9)& (5,6,9)& (5,7,8)
\end{array}$

\begin{description}
\item[\M{10}{116}{.2}]
\end{description}

$\begin{array}{@{}*{5}{l@{\hspace{4pt}}}l@{}}
 (0,1,8)& (0,9,1)& (0,5,2)& (0,2,7)& (0,4,3)& (0,3,6)\\
 (0,7,4)& (0,6,5)& (0,8,9)& (1,2,3)& (1,3,2)& (1,4,5)\\
 (1,5,4)& (1,6,7)& (1,7,6)& (1,9,8)& (2,6,4)& (2,4,8)\\
 (2,5,9)& (2,8,6)& (2,9,7)& (3,4,9)& (3,5,7)& (3,8,5)\\
 (3,9,6)& (3,7,8)& (4,6,8)& (4,7,9)& (5,6,9)& (5,8,7)
\end{array}$

\begin{description}
\item[\M{10}{118}{.1}]
\end{description}

$\begin{array}{@{}*{5}{l@{\hspace{4pt}}}l@{}}
 (0,1,8)& (0,9,1)& (0,4,2)& (0,2,7)& (0,5,3)& (0,3,6)\\
 (0,7,4)& (0,6,5)& (0,8,9)& (1,2,3)& (1,3,2)& (1,4,5)\\
 (1,5,4)& (1,6,7)& (1,7,6)& (1,9,8)& (2,4,6)& (2,8,5)\\
 (2,5,9)& (2,6,8)& (2,9,7)& (3,8,4)& (3,4,9)& (3,5,7)\\
 (3,9,6)& (3,7,8)& (4,8,6)& (4,7,9)& (5,6,9)& (5,8,7)
\end{array}$

\begin{description}
\item[\M{10}{134}{.1}]
\end{description}

$\begin{array}{@{}*{5}{l@{\hspace{4pt}}}l@{}}
 (0,1,8)& (0,9,1)& (0,5,2)& (0,2,7)& (0,4,3)& (0,3,6)\\
 (0,6,4)& (0,7,5)& (0,8,9)& (1,2,3)& (1,3,2)& (1,4,5)\\
 (1,5,4)& (1,6,7)& (1,7,6)& (1,9,8)& (2,4,8)& (2,8,4)\\
 (2,5,7)& (2,6,9)& (2,9,6)& (3,4,6)& (3,5,9)& (3,9,5)\\
 (3,7,8)& (3,8,7)& (4,7,9)& (4,9,7)& (5,6,8)& (5,8,6)
\end{array}$

\begin{description}
\item[\M{10}{134}{.2}]
\end{description}

$\begin{array}{@{}*{5}{l@{\hspace{4pt}}}l@{}}
 (0,1,8)& (0,9,1)& (0,2,5)& (0,7,2)& (0,4,3)& (0,3,6)\\
 (0,6,4)& (0,5,7)& (0,8,9)& (1,2,3)& (1,3,2)& (1,4,5)\\
 (1,5,4)& (1,6,7)& (1,7,6)& (1,9,8)& (2,4,8)& (2,8,4)\\
 (2,7,5)& (2,6,9)& (2,9,6)& (3,4,6)& (3,5,9)& (3,9,5)\\
 (3,7,8)& (3,8,7)& (4,7,9)& (4,9,7)& (5,6,8)& (5,8,6)
\end{array}$

\end{document}